\newtheorem{theorem}{Assumptions}
\newtheorem{lemma}[theorem]{Lemma}
\newtheorem{corollary}[theorem]{Corollary}
\theoremstyle{definition}
\newtheorem{definition}[theorem]{Definition}
\newtheorem{example}[theorem]{Example}
\newtheorem{algorithm}[theorem]{Algorithm}
\newcommand{\deltaexp}{(\alpha-\alpha^{-1})z^{-1}}
\newcommand{\homfly}{HOMFLY-PT}
\newcommand{\link}{\mathcal{L}}
\newcommand{\poly}{\mathrm{poly}}
\newcommand{\R}{\mathbb{R}}
\newcommand{\regina}{\emph{Regina}}
\begin{document}

\title{The HOMFLY-PT polynomial is fixed-parameter tractable}
\author{Benjamin A.\ Burton}
\address{School of Mathematics and Physics \\
    The University of Queensland \\
    Brisbane QLD 4072 \\
    Australia}
\email{bab@maths.uq.edu.au}
\thanks{The author is supported by the Australian Research Council
    under the Discovery Projects funding scheme (DP150104108).}

\begin{abstract}
    Many polynomial invariants of knots and links, including the
    Jones and HOMFLY-PT polynomials, are widely used in practice
    but \#P-hard to compute. It was shown by Makowsky in 2001 that
    computing the Jones polynomial is fixed-parameter tractable in the
    treewidth of the link diagram, but the parameterised complexity of the
    more powerful HOMFLY-PT polynomial remained an open problem.
    Here we show that computing HOMFLY-PT is fixed-parameter tractable
    in the treewidth, and we give the
    first sub-exponential time algorithm to compute it for arbitrary links.
\end{abstract}

\maketitle

%
%

\section{Introduction}

In knot theory, polynomial invariants are widely used to distinguish
between different topological knots and links.
Although they are powerful tools, these invariants are
often difficult to compute: in particular, the one-variable Jones
polynomial \cite{jones85-polynomial}
and the stronger two-variable \homfly\ polynomial
\cite{freyd85-homfly,przytycki88-homfly}
are both \#P-hard to compute in general
\cite{jaeger90-jones,welsh93-complexity}.

Despite this, we can use parameterised complexity to analyse classes of
knots and links for which these polynomials become tractable to compute.
In the early 2000s, as a part of a larger work on graph polynomials,
Makowsky showed that the Jones polynomial can be computed in polynomial time
for links whose underlying 4-valent graphs have bounded
treewidth \cite{makowsky05-tutte}---in other words, the Jones polynomial
is \emph{fixed-parameter tractable} with respect to treewidth.
A slew of other parameterised
tractability results also appeared around this period for the Jones and
\homfly\ polynomials: parameters included the pathwidth of the
underlying graph \cite{makowsky03-knot}, the number of Seifert circles
\cite{makowsky03-knot,morton90-braids},
and the complexity of tangles in an algebraic presentation
\cite{makowsky03-knot}.

However, there was an important gap: it remained open
as to whether the \homfly\ polynomial is fixed-parameter tractable
with respect to treewidth.  This was dissatisfying because
the \homfly\ polynomial is both powerful and widely used, and
the treewidth parameter lends itself extremely well to building fixed-parameter
tractable algorithms, due to its strong connections to logic
\cite{courcelle87-context-free,courcelle90-rewriting} and its natural
fit with dynamic programming.

The first major contribution of this paper is to resolve this open problem: we
prove that computing the \homfly\ polynomial of a link is
fixed-parameter tractable with respect to treewidth (Theorem~\ref{t-fpt}).
Our proof gives an explicit algorithm; this is feasible to implement,
and will soon be released as part of the topological software package
{\regina} \cite{burton04-regina,regina}.

Regarding practicality: fixed-parameter tractable algorithms are
only useful if the parameter is often small, and in this sense treewidth
is a useful parameter: the underlying graph is planar, and so
the treewidth of an $n$-crossing link diagram is at worst $O(\sqrt{n})$.
This is borne out in practice---for instance, a simple greedy computation
using {\regina} shows that, for Haken's famous 141-crossing
``Gordian unknot'', the treewidth is at most 12.
Since \homfly\ is a topological invariant, one can also attempt to
use local moves on a link diagram to reduce the treewidth of
the underlying graph, and {\regina} contains facilities for this also.


There are few explicit algorithms in the literature for computing
the \homfly\ polynomial in general: arguably the most notable
general algorithm is Kauffman's
exponential-time \emph{skein-template algorithm} \cite{kauffman90-state},
which forms the basis for our algorithm in this paper.
Other notable algorithms are either designed for specialised inputs (e.g.,
Murakami et~al.'s algorithm for 2-bridge links \cite{murakami14-homfly}),
or are practical but do not prove unqualified guarantees on their
complexity \cite{comoglio11-homfly,jenkins89-homfly}.

The second major contribution of this paper is to improve the worst-case
running time for computing the \homfly\ polynomial in the general case---in
particular, we prove the first \emph{sub-exponential} running time
(Corollary~\ref{c-subexp}).  Here by \emph{subexponential} we mean
$e^{o(n)}$; the specific bound that we give is $e^{O(\sqrt n \cdot \log n)}$.
The result follows immediately from analysing our fixed-parameter tractable
algorithm using the $O(\sqrt n)$ bound on the treewidth of a planar graph.

Throughout this paper we assume that the input link diagram contains no
zero-crossing components (i.e., unknotted circles that are
disjoint from the rest of the link diagram), since otherwise the number of
crossings is not enough to adequately measure the input size.
Such components are easy to handle---each zero-crossing component multiplies
the \homfly\ polynomial by $\deltaexp$, and so we simply compute the
\homfly\ polynomial without them and then adjust the result accordingly.

%
%

\section{Background}

A \emph{link} is a disjoint union of piecewise linear closed
curves embedded in $\R^3$; the image of each curve is a
\emph{component} of the link.  A \emph{knot} is a link with precisely
one component.  In this paper we \emph{orient} our links by assigning a
direction to each component.

A \emph{link diagram} is a piecewise linear projection of a link onto
the plane, where the only multiple points are \emph{crossings} at which
one section of the link crosses another transversely.  The sections of
of the link diagram between crossings are called \emph{arcs}.
The number of crossings is often used as a measure of input size;
in particular, an $n$-crossing link diagram can be encoded in
$O(n \log n)$ bits without losing any topological information.

Figure~\ref{fig-links} shows two examples: the first is a knot with
4~crossings and 8~arcs, and the second is a 2-component link with
5~crossings and 10~arcs.

\begin{figure}[tb]
    \centerline{\includegraphics[scale=0.4]{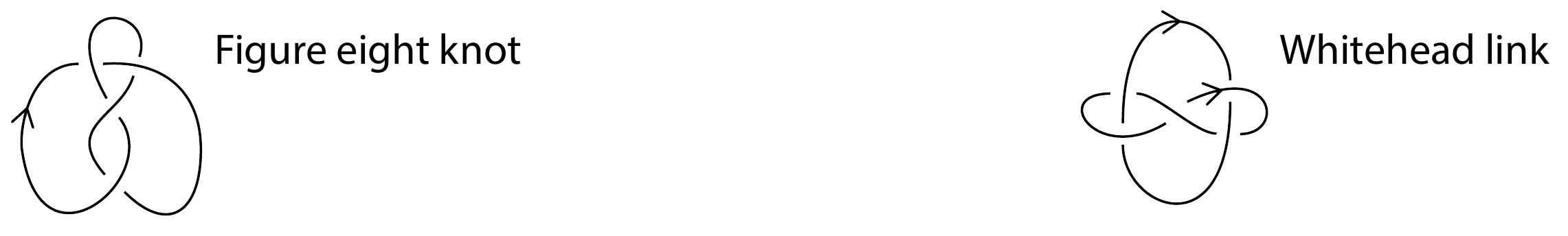}}
    \caption{Examples of knots and links}
    \label{fig-links}
\end{figure}

Each crossing has a \emph{sign} which is either
\emph{positive} or \emph{negative}, according to the direction in which
the upper strand passes over the lower; see Figure~\ref{fig-signs} for details.
The \emph{writhe} of a link diagram is the number of positive crossings
minus the number of negative crossings (so the examples from
Figure~\ref{fig-links} have writhes 0 and $-1$ respectively).

\begin{figure}[tb]
    \centerline{\includegraphics[scale=0.4]{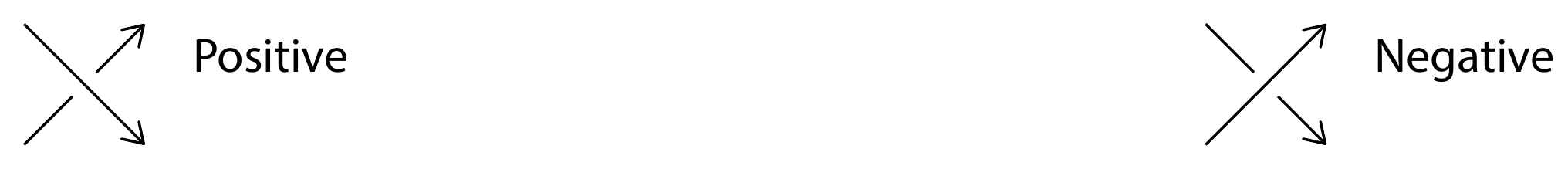}}
    \caption{Positive and negative crossings}
    \label{fig-signs}
\end{figure}

In this paper we use two operations that change a link diagram at a
single crossing.  To \emph{switch} a crossing is to move the upper
strand beneath the lower, and to \emph{splice} a crossing is to change the
connections between the incoming and outgoing arcs; see Figure~\ref{fig-ops}.

\begin{figure}[tb]
    \centerline{\includegraphics[scale=0.4]{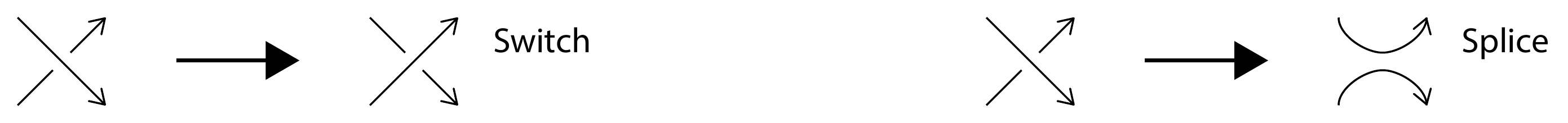}}
    \caption{Switching and splicing a crossing}
    \label{fig-ops}
\end{figure}

The \emph{\homfly\ polynomial} of a link $\link$ is a Laurent polynomial
in the two variables $\alpha$ and $z$ (a \emph{Laurent polynomial} is a
polynomial that allows both positive and negative exponents).
There are two different but essentially equivalent definitions of the
\homfly\ polynomial in the literature (the other is typically given as a
polynomial in $\ell$ and $m$ \cite{lickorish97}); we follow the same
definition used by Kauffman \cite{kauffman90-state}.

A \emph{parameterised problem} is a computational problem where the
input includes some numerical parameter $k$.  Such a problem is said to
be \emph{fixed-parameter tractable} if there is an algorithm with
running time $O(f(k) \cdot \poly(n))$, where $f$ is an arbitrary
function and $n$ is the input size.  A consequence of this is that,
for any class of inputs whose parameter $k$ is universally bounded,
the algorithm runs in polynomial time.

Treewidth is a common parameter for fixed-parameter tractable algorithms
on graphs, and we discuss it in detail now.
Throughout this paper, all graphs are allowed to be multigraphs; that
is, they may contain parallel edges and/or loops.

\begin{definition}[Treewidth] \label{d-treewidth}
Given a graph $\Gamma$ with vertex set $V$, a \emph{tree decomposition} of
$\Gamma$ consists of a tree $\tau$ and \emph{bags} $\beta_i \subseteq V$ for
each node $i$ of $T$, subject to the following constraints:
(i)~each $v \in V$ belongs to some bag $\beta_i$;
(ii)~for each edge of $\Gamma$, its two endpoints $v,w \in V$ belong to some
common bag $\beta_i$; and
(iii)~for each $v \in V$, the bags containing $v$ correspond to a
(connected) subtree of $T$.

The \emph{width} of this tree decomposition is $\max |\beta_i|-1$, and the
\emph{treewidth} of $\Gamma$ is the smallest width of any tree decomposition
of $\Gamma$.
\end{definition}

A consequence of the Lipton-Tarjan planar separator theorem
\cite{lipton79-planar}
is that every planar graph on $n$ vertices has treewidth $O(\sqrt{n})$.

\begin{definition}[Rooted tree decomposition] \label{d-rooted}
    Let $\Gamma$ be a graph.  A \emph{rooted tree decomposition} of
    $\Gamma$ is a
    tree decomposition where one bag
    is singled out as the \emph{root bag}.  We
    define children and parents in the usual way: for any adjacent bags
    $\beta,\beta'$ in the tree, if $\beta$ is closer in the tree to the root
    than $\beta'$ then we call $\beta'$ a \emph{child bag} of $\beta$,
    and we call $\beta$ the (unique) \emph{parent bag} of $\beta'$.
    A bag with no children is called a \emph{leaf bag}.

    More generally, we say that bag $\beta'$ is a \emph{descendant} of
    bag $\beta$ if $\beta \neq \beta'$ and there is some sequence
    $\beta = \beta_0, \beta_1, \beta_2, \ldots, \beta_i = \beta'$
    where each $\beta_i$ is the parent bag of $\beta_{i+1}$.
\end{definition}

\begin{definition}[Nice tree decomposition] \label{d-nice}
    Let $\Gamma$ be a graph.  A \emph{nice tree decomposition} of
    $\Gamma$ is a
    rooted tree decomposition with the following additional properties:
    \begin{enumerate}
        \item The root bag is empty.
        \item Every leaf bag contains precisely one vertex.
        \item Every non-leaf bag has either one or two child bags.
        \item If a bag $\beta_i$ has two child bags $\beta_j$ and $\beta_k$,
        then $\beta_i = \beta_j = \beta_k$; we call $\beta_i$ a
        \emph{join bag}.
        \item If a bag $\beta_i$ has only one child bag $\beta_j$, then
        either:
        \begin{itemize}
            \item $|\beta_i| = |\beta_j| + 1$ and $\beta_i \supset \beta_j$.
            Here we call $\beta_i$ an \emph{introduce bag}, and the
            single vertex in $\beta_i \backslash \beta_j$ is called the
            \emph{introduced vertex}.
            \item $|\beta_i| = |\beta_j| - 1$ and $\beta_i \subset \beta_j$.
            Here we call $\beta_i$ a \emph{forget bag}, and the single
            vertex in $\beta_j \backslash \beta_i$ is called the
            \emph{forgotten vertex}.
        \end{itemize}
    \end{enumerate}
\end{definition}

%
%

\section{Kauffman's skein-template algorithm}

Kauffman's skein-template algorithm works by building a decision tree.
The leaves of this decision tree are 
obtained from the original
link by switching and/or splicing some crossings.  Each leaf is then
evaluated as a Laurent polynomial, according to the number of components
and the specific switches and/or splices that were performed, and these
are summed to obtain the final \homfly\ polynomial.

Our fixed-parameter tractable algorithm (described in Section~\ref{s-fpt})
works by inductively constructing,
aggregating and analysing small pieces of Kauffman's decision tree.  We
therefore devote this section to describing Kauffman's algorithm in detail,
beginning with a description of the algorithm itself followed by a
detailed example.

\begin{algorithm}[Kauffman \cite{kauffman90-state}]
    Let $\link$ be a link diagram with $n$ crossings (and therefore $2n$ arcs).
    Then the following procedure computes the \homfly\ polynomial of $\link$.

    Arbitrarily label the arcs $1,2,\ldots,2n$.
    We build a decision tree by walking through the link as follows:
    \begin{itemize}
        \item Locate the lowest-numbered arc that has not yet been
        traversed, and follow the link along this arc in the direction
        of its orientation.
        \item Each time we encounter a new crossing that has not yet been
        traversed:
        \begin{itemize}
            \item If we are passing \emph{over} the crossing, then we simply
            pass through it and continue traversing the link.
            \item If we are passing \emph{under} the crossing, then we make a
            fork in the decision tree.  On one branch we \emph{splice} the
            crossing, and on the other branch we \emph{switch} the crossing.
            Either way, we then pass through the crossing (following the
            splice if we made one) and continue traversing the link.
        \end{itemize}
        \item Whenever we encounter a crossing for the second time,
        we simply pass through it (again following the splice if we made one)
        and continue traversing the link.
        \item Whenever we return to an arc that has already been traversed
        (thus closing off a component of our modified link):
        \begin{itemize}
            \item If there are still arcs remaining that have not yet been
            traversed, then we locate the lowest-numbered such arc
            and continue our traversal from there.
            \item If every arc has now been traversed, then the resulting
            modified link becomes a leaf of our decision tree.
        \end{itemize}
    \end{itemize}
    To each leaf of the decision tree, we assign the polynomial term
    $(-1)^{t_-} z^t \alpha^{w-w_0} \delta^{c-1}$, where:
    \begin{itemize}
        \item $t$ is the number of splices that we performed on this branch
        of the decision tree;
        \item $t_-$ is the number of splices that we performed on negative
        crossings;
        \item $w$ is the writhe of the modified link, after any switching
        and/or splicing;
        \item $w_0$ is the writhe of the original link $\link$, before any
        switching or splicing;
        \item $c$ is the number of components of the modified link;
        \item $\delta$ expands to the polynomial $\deltaexp$.
    \end{itemize}
    The \homfly\ polynomial of $\link$ is then the sum of these
    polynomial terms over all leaves. 
\end{algorithm}

Note that different branches of the decision tree may traverse the arcs
of the link in a different order, since each splice changes the connections
between arcs; likewise, the modified links at the leaves of the
decision tree may have different numbers of link components.

\begin{example}
    Figure~\ref{fig-kauffman} shows the algorithm applied to the figure eight
    knot, as depicted at the root of the tree.  The eight arcs are numbered
    1--8; to help with the discussion we also label the four crossings
    $A$, $B$, $C$ and $D$, which have signs $+$, $-$, $-$ and $+$ respectively.

    \begin{figure}[tb]
        \centerline{\includegraphics[scale=0.41]{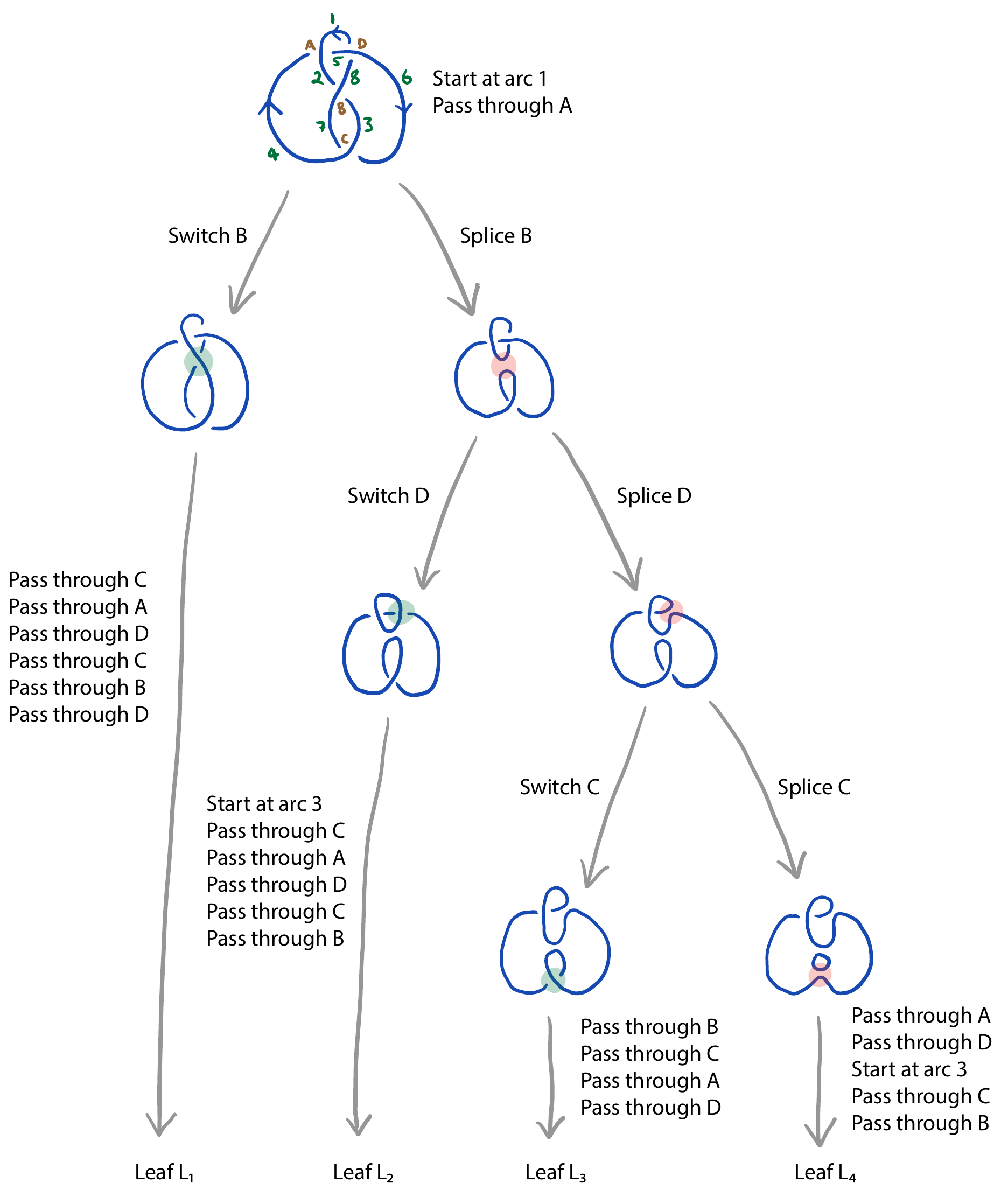}}
        \caption{Running Kauffman's skein-template algorithm}
        \label{fig-kauffman}
    \end{figure}

    We begin at arc~1, and because we first encounter crossing~$A$ on the
    upper strand, we leave it unchanged and move on to arc~2.  For
    crossing~$B$ we can either switch or splice, and in these cases the
    traversal continues to arc~3 or~8 respectively.  The decision
    process continues as shown in the diagram, resulting in the four leaves
    $L_1$, $L_2$, $L_3$ and $L_4$.

    Of particular note is the branch where we splice $B$ and then switch
    $D$.  Here the traversal runs through arcs 1, 2 and 8, at which point it
    returns to arc~1, closing off a small loop.  We now begin
    again at arc~3: this takes us through crossing $C$ (which we pass
    through because we see it first on the upper strand), then crossing
    $A$ (which we pass through because we are seeing it for the second
    time), then crossing $D$ (which we likewise pass through), and so on.

    The polynomials assigned to the four leaves are shown below:
    \[ \small \begin{array}{l|r|r|r|r|r|c}
        & t & t_- & w & w-w_0 & c & \mbox{Poly.} \\
        \hline
        L_1 & 0 & 0 & 2 & 2 & 1 & \alpha^2 \\
        L_2 & 1 & 1 & -1 & -1 & 2 & -z\alpha^{-1}\delta \\
        \end{array} \qquad
       \begin{array}{l|r|r|r|r|r|c}
        & t & t_- & w & w-w_0 & c & \mbox{Poly.} \\
        \hline
        L_3 & 2 & 1 & 2 & 2 & 1 & -z^2\alpha^2 \\
        L_4 & 3 & 2 & 1 & 1 & 2 & z^3\alpha\delta
        \end{array}\]
    This yields the final \homfly\ polynomial
    \[ \alpha^2 - z^2\alpha^2 + (z^3\alpha - z\alpha^{-1})\delta =
    \alpha^2 - z^2\alpha^2 + (z^3\alpha - z\alpha^{-1})\deltaexp =
    \alpha^2 + \alpha^{-2} - z^2 - 1. \]
\end{example}


\begin{theorem}
    Kauffman's skein-template algorithm computes the \homfly\ polynomial
    of an $n$-crossing link in time $2^n \cdot \poly(n)$.
\end{theorem}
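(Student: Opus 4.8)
The plan is to bound separately the number of leaves of the decision tree and the work done at each leaf (including along the root-to-leaf traversal that produces it), and then combine the two.

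First I would bound the number of leaves by $2^n$. The key observation is that the algorithm forks only when it encounters a crossing for the \emph{first} time \emph{and} passes \emph{under} it; in every other situation (passing over a crossing, or meeting a crossing for the second time) it proceeds deterministically. Since the link has exactly $n$ crossings, and since a crossing that has already been traversed is never forked again, each crossing triggers at most one binary fork along any single branch. Hence every root-to-leaf path contains at most $n$ forks, so the decision tree has depth at most $n$ and at most $2^n$ leaves. I would take care here to confirm that splicing, which rewires the connections between arcs and can change the order in which crossings are visited on different branches, does not invalidate this count: on any fixed branch the set of crossings still has size $n$, and each contributes at most one fork.

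Next I would show that the work associated with each leaf is $\poly(n)$. The traversal producing a single leaf walks along the $2n$ arcs, closing off one modified component at a time and restarting from the lowest unvisited arc; since each arc is traversed a bounded number of times, this walk and the bookkeeping of switches and splices take $O(n)$ steps, and at each step identifying the current crossing and whether it is new costs $\poly(n)$. Computing the leaf's polynomial term $(-1)^{t_-} z^t \alpha^{w-w_0} \delta^{c-1}$ is likewise cheap: the counts $t$, $t_-$ and $c$ are each at most $n$, the writhe $w$ of the modified link is computed in $O(n)$ time, and since $\delta = \deltaexp$, the factor $\delta^{c-1}$ expands to a Laurent polynomial with at most $c \le n$ terms whose exponents are bounded by $O(n)$. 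Multiplying by the leading monomial keeps both the number of terms and the exponent range $O(n)$, so the leaf polynomial is produced in $\poly(n)$ time and has $\poly(n)$ size.

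Finally I would combine these. Summing the leaf polynomials yields a Laurent polynomial whose exponents stay bounded by $O(n)$, so accumulating all the terms costs $\poly(n)$ per leaf and $2^n \cdot \poly(n)$ in total; together with the per-leaf traversal cost this gives overall running time $2^n \cdot \poly(n)$, as claimed. I expect the main obstacle to be the leaf-count argument: one must verify rigorously that, despite splicing altering the connectivity and the traversal order from branch to branch, each of the $n$ crossings contributes at most one fork on any given path, so that the tree genuinely has at most $2^n$ leaves rather than some larger branching.
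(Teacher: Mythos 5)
Your proof is correct and takes essentially the same approach as the paper: the paper's argument is precisely that the tree has at most $2^n$ leaves because branching occurs only on the first traversal of a crossing (and only when passing under), with all remaining work being polynomial. Your version simply fills in the per-leaf bookkeeping details that the paper leaves implicit.
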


\begin{proof}
    The decision tree has $\leq 2^n$ leaves, since we
    only branch the first time we traverse each crossing (and even then,
    only if we first traverse the crossing from beneath, not above).
    All other operations are polynomial time, giving an overall running
    time of $2^n \cdot \poly(n)$
\end{proof}

Although it requires exponential time, Kauffman's algorithm can compute the
\homfly\ polynomial in polynomial \emph{space}.  This is because we do not
need to store the entire
decision tree---we can simply perform a depth-first traversal through the
tree, making and undoing switches and splices as we go, and keep a
running total of the polynomial terms for those leaves that we have
encountered so far.

%
%

\section{A fixed-parameter tractable algorithm} \label{s-fpt}

In this section we present an explicit algorithm to show that computing the
\homfly\ polynomial is fixed-parameter tractable in the treewidth of the
input link diagram.

\begin{definition}
    Let $\link$ be a link diagram.  The \emph{graph of $\link$},
    denoted $\Gamma(\link)$, is the directed planar 4-valent multigraph whose
    vertices are the crossings of $\link$, and whose directed edges are the
    oriented arcs of $\link$.
\end{definition}

The first main result of this section, which resolves the open problem of
the parameterised complexity of computing the \homfly\ polynomial, is:

\begin{theorem} \label{t-fpt}
    Consider the parameterised problem whose input is a
    link diagram $\link$, whose parameter is the treewidth of the graph
    $\Gamma(\link)$, and whose output is the \homfly\ polynomial of $\link$.
    Then this problem is fixed-parameter tractable.
\end{theorem}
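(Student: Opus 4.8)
The plan is to perform dynamic programming over a nice tree decomposition of the graph $\Gamma(\link)$, assembling Kauffman's decision tree from local pieces rather than enumerating its (up to $2^n$) leaves one at a time. First I would compute a tree decomposition of $\Gamma(\link)$ of width $k = O(\mathrm{tw}(\Gamma(\link)))$ and convert it into a nice tree decomposition (Definitions~\ref{d-rooted} and~\ref{d-nice}); since $\Gamma(\link)$ is $4$-valent, every bag meets at most $4(k+1)$ arcs. The key structural observation is that the per-leaf weight $(-1)^{t_-}z^t\alpha^{w-w_0}\delta^{c-1}$ splits into contributions local to each crossing—each splice contributes a factor of $z$ (and a sign if the crossing is negative), and each crossing contributes to the writhe $w$ according to whether it is switched, spliced, or passed through—together with the single genuinely global quantity $c$, the number of components of the modified link. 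Moreover, switching a crossing never alters how the arcs connect, so $c$ depends only on the set of spliced crossings and the fixed orientations; this is exactly the kind of connectivity information that a tree decomposition can track through a partition of the boundary arcs.

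For each bag I would store a table indexed by a bounded signature describing the part of the link already processed (the crossings introduced in the subtree below the bag), with each table entry holding an accumulated polynomial in $z$ and $\alpha$ that sums the local weights over all switch/splice choices compatible with that signature. The signature must record: (i) the partition of the boundary arcs induced by the partial circuits built so far, so that component merges and closures can be counted and ultimately turned into powers of $\delta$; and (ii) enough of Kauffman's traversal data to decide, for each crossing, whether it is a branch point, i.e.\ whether the traversal first reaches it on the under-strand. Because the traversal visits the components of the modified link in order of lowest arc label and follows orientation, I expect (ii) to require a visitation-order structure on the boundary arcs (which partial circuit has already been entered, and in what relative order its strands are first met). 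Partitions together with such an ordering of $O(k)$ boundary arcs number at most $k^{O(k)}$, so each bag carries $f(k)=e^{O(k\log k)}$ states, which is what yields fixed-parameter tractability.

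The transitions follow the four bag types. A leaf bag initialises a single isolated arc-end; an introduce bag adds a crossing, branching (switch versus splice) exactly when the new crossing is recognised as first-seen-from-below and otherwise passing through, updating the partition and order accordingly and multiplying in the local $z$, sign, and writhe factors; a forget bag is where a crossing becomes fully processed, so that its branch-point status is now determined and its contribution can be finalised, and where internal circuit closures are detected and absorbed as factors of $\delta$; a join bag combines the two child tables by matching compatible signatures and merging their boundary partitions and orders. At the empty root bag the boundary is gone, every crossing is processed, and the stored polynomial, after translating the recorded component count into the global factor $\delta^{c-1}=((\alpha-\alpha^{-1})z^{-1})^{c-1}$, is precisely the \homfly\ polynomial of $\link$.

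The main obstacle will be (ii): faithfully localising Kauffman's inherently sequential traversal. The branch rule (splice or switch only on the first, under-strand, visit) and the restart rule (continue from the lowest-numbered unvisited arc) are both global, and a single splice can reconnect distant circuits and thereby change which strand of a crossing is seen first. The crux is therefore to prove that the bounded order-and-partition signature carried across introduce, forget, and especially join bags is sufficient to reconstruct each branch-point decision correctly and to charge each leaf's weight exactly once; verifying that the join operation composes these orderings consistently, and that no information beyond the $O(k)$ boundary arcs is ever needed, is where the real work lies. Everything else—the polynomial bookkeeping for $t$, $t_-$, and $w$, and the conversion of closures into powers of $\delta$—is routine once the signature is shown to be a faithful and complete invariant of the processed sub-diagram.
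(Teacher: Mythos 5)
Your framework---dynamic programming over a nice tree decomposition, with bag states recording a pairing and ordering of boundary arcs plus an aggregated polynomial, and with $e^{O(k\log k)}$ states per bag---matches the overall structure of the paper's proof (its ``configurations'' and ``evaluations'', Definitions~\ref{d-config} and~\ref{d-eval}). But you have left open exactly the step the paper's proof turns on, and as stated your approach does not close it. You treat Kauffman's arc numbering as given (arbitrary) and hope that a bounded ``visitation-order structure on the boundary arcs'' suffices to reconstruct every branch-point decision. With an arbitrary numbering this fails: consider a forgotten crossing $c$ one of whose strands lies on a closed component made entirely of forgotten crossings, while its other strand lies on a partial segment that still exits to the current bag. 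Whether Kauffman's traversal reaches $c$ first on the upper or the lower strand depends on whether that closed component or that segment is traversed first, which in turn depends on arc labels elsewhere in the link---including in parts the DP has not yet processed. So the \texttt{pass}/\texttt{switch}/\texttt{splice} decision at $c$, and hence its polynomial contribution, cannot be finalised at forget time from bounded boundary data; the paper observes that carrying the needed restart/starting-arc information explicitly drives the state count to $O(n^n)$.

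The missing idea is that the \homfly\ polynomial does not depend on which arc ordering Kauffman's algorithm uses, so one is free to \emph{choose} the ordering---and the paper chooses it from the tree decomposition itself (the ``tree-based ordering'' of Definition~\ref{d-ordering}): arcs are ranked by how close to the root their head crossing is forgotten. This aligns the traversal order with the DP order: a segment still open at a bag will eventually acquire arcs smaller than every arc of any already-closed forgotten component, so segments are always traversed before closed components, and a closed component's starting arc is fixed the moment it is closed off. With that choice, the only ordering data a state needs is the sequence of matching boundary-arc pairs, giving the $(2k)!^2$ bound of Lemma~\ref{l-count} and making every forget-bag decision local. Without this device (or an equivalent one), your signature is not the ``faithful and complete invariant of the processed sub-diagram'' you require, and the crux you yourself flag as ``where the real work lies'' remains unproven.
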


\subsection{Algorithm overview} \label{ss-overview}

The remainder of Section~\ref{s-fpt} is devoted to proving
Theorem~\ref{t-fpt}.  First, however, we give a brief overview of the
algorithm and the difficulties that it must overcome.

Roughly speaking, our algorithm takes a nice tree
decomposition of $\Gamma(\link)$ and works from the leaf bags to the
root bag.  For each bag of the tree, we consider a range of possible
``boundary conditions'' for how a link traversal interacts with the bag,
and for each set of boundary conditions we aggregate all ``partial leaves''
of Kauffman's decision tree that satisfy them.
We formalise these boundary conditions and their resulting aggregations
using the notions of a \emph{configuration} and \emph{evaluation}
respectively (Definitions~\ref{d-config} and \ref{d-eval}).

This general pattern of dynamic programming over a tree decomposition is
common for fixed-parameter tractable algorithms.  The main difficulty
that we must overcome is to keep the number of configurations polynomial in
the number of crossings $n$.  This difficulty arises because the
choices in Kauffman's decision tree depend upon the \emph{order} in which you
traverse the crossings, and so each configuration must encode a starting arc
for every section of a link traversal in every ``partial leaf'' of the decision
tree.
Because a ``partial leaf'' could contain $O(n)$ disjoint sections of a
traversal, each with $O(n)$ potential starting arcs,
the resulting number of configurations would grow at a rate of $O(n^n)$,
which is too large.

Our solution is the following.  Recall that Kauffman's algorithm uses an
arbitrary ordering of the arcs of the link to
determine the order in which we traverse arcs and make decisions (to pass
through, switch and/or splice crossings).  In our algorithm, we order
the arcs using the tree decomposition---for each directed arc,
we identify the forget bag in which its end crossing is forgotten, and
we then order the arcs according to how close this forget bag is to the
root of the tree.  This makes the ordering of arcs
\emph{inherent} to the tree decomposition, and so we do not need to
explicitly encode starting arcs in our configurations.  This is
enough to reduce the number of configurations at each bag to a
function of the treewidth alone, with no dependency on $n$.

\subsection{Properties of tree decompositions}

We now make some small observations about tree
decompositions of the graphs of links.

\begin{definition}
    Let $\link$ be a link diagram, let $T$ be a rooted tree decomposition
    of $\Gamma(\link)$, and let $\beta$ be any bag of $T$.  For each
    crossing $c$ of $\link$, we say that:
    \begin{itemize}
        \item $c$ is \emph{unvisited} at $\beta$
        if $c$ does not appear in either $\beta$
        or any bags in the subtree rooted at $\beta$;
        \item $c$ is \emph{current} at $\beta$
        if $c$ appears in the bag $\beta$ itself;
        \item $c$ is \emph{forgotten} at $\beta$
        if $c$ does not appear in the bag $\beta$, but
        does appear in some bag in the subtree rooted at $\beta$.
    \end{itemize}
\end{definition}

Observe that the unvisited, current and forgotten crossings at $\beta$
together form a \emph{partition} of all crossings of $\link$.

\begin{lemma} \label{l-badarcs}
    Let $\link$ be a link diagram, let $T$ be a rooted tree decomposition
    of $\Gamma(\link)$, and let $\beta$ be any bag of $T$.
    Then no arc of $\link$ can connect a crossing that is forgotten at
    $\beta$ with a crossing that is unvisited at $\beta$, or vice versa.
\end{lemma}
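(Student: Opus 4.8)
The plan is to argue by contradiction directly from the three defining axioms of a tree decomposition (Definition~\ref{d-treewidth}), together with the definitions of forgotten and unvisited crossings. Suppose some arc of $\link$ joins a crossing $c$ that is forgotten at $\beta$ to a crossing $c'$ that is unvisited at $\beta$; the ``vice versa'' case is symmetric. In the graph $\Gamma(\link)$, this arc is an edge with endpoints $c$ and $c'$, so by axiom~(ii) there is some bag $\gamma$ of $T$ containing both $c$ and $c'$.

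The key step is to pin down where the bags containing $c$, and the bags containing $c'$, can sit relative to the subtree rooted at $\beta$. Write $T_\beta$ for this subtree, that is, $\beta$ together with all its descendants. By axiom~(iii) the bags containing $c$ form a connected subtree $S_c$ of $T$. Since $c$ is forgotten at $\beta$, we have $\beta \notin S_c$, yet $S_c$ meets $T_\beta$ in at least one node. I would then observe that $\beta$ is the unique node through which any path in $T$ must pass in order to travel from a strict descendant of $\beta$ to a node outside $T_\beta$; hence a connected set of nodes that touches $T_\beta$ but avoids $\beta$ cannot escape $T_\beta$. This forces $S_c \subseteq T_\beta \setminus \{\beta\}$. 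On the other hand, because $c'$ is unvisited at $\beta$, every bag containing $c'$ lies entirely outside $T_\beta$.

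These two containments are incompatible: the common bag $\gamma$ lies in $S_c$ and also among the bags containing $c'$, so $\gamma$ would have to be simultaneously inside $T_\beta$ and outside $T_\beta$. This contradiction completes the argument.

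The only delicate point---and the step I would state most carefully---is the claim that a connected subtree $S_c$ which meets $T_\beta$ but omits its root $\beta$ must lie wholly within $T_\beta$. This is a purely structural fact about rooted trees: deleting $\beta$ separates $T_\beta \setminus \{\beta\}$ from the remainder of $T$, so no connected set of nodes can straddle the two sides without passing through $\beta$. Everything else is an immediate application of the tree-decomposition axioms and the definitions of forgotten and unvisited, so I expect this structural observation to be the main (and only) obstacle.
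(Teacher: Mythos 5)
Your proposal is correct and follows essentially the same argument as the paper: invoke condition~(ii) to get a common bag for the two endpoints, then use condition~(iii) to show all bags containing the forgotten crossing are descendants of $\beta$ while no bag containing the unvisited crossing can be, yielding a contradiction. The only difference is that you explicitly isolate and justify the structural fact that $\beta$ separates its strict descendants from the rest of the tree, which the paper uses implicitly; this makes your write-up slightly more careful but not a different proof.
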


\begin{proof}
    Let crossing $c$ be forgotten at $\beta$, and let crossing $d$ be
    unvisited at $\beta$.  If there were an arc from $c$ to $d$ (or vice
    versa) then some bag of $T$ would need to contain both $c$ and $d$,
    by condition~(ii) of Definition~\ref{d-treewidth}.

    Since $c$ appears in a descendant bag of $\beta$ but not
    $\beta$ itself, condition~(iii) of Definition~\ref{d-treewidth}
    means that \emph{all} bags containing $c$ must be descendant bags of
    $\beta$.  However, since $d$ is unvisited, no bag containing $d$ can
    be a descendant bag of $\beta$, yielding a contradiction.
\end{proof}

\begin{lemma} \label{l-uniqueforget}
    Let $\link$ be a link diagram, let $T$ be a nice tree decomposition
    of $\Gamma(\link)$, and let $c$ be any crossing of $\link$.
    Then $T$ has a unique forget bag for which $c$ is the
    forgotten vertex.
\end{lemma}

\begin{proof}
    Since the root bag of $T$ is empty, there must be some forget bag
    for which $c$ is the forgotten vertex.  Moreover, since the bags
    containing $c$ form a subtree of $T$, there is only one bag that
    contains $c$ but whose parent does not---the root of this
    subtree.
\end{proof}

\subsection{Framework for the algorithm} \label{ss-framework}

We now define precisely the problems that we solve at each stage of the
algorithm.
Our first task is
to define a \emph{configuration}---that is, the ``boundary conditions'' that
describe how a link traversal interacts with an individual bag of the
tree decomposition.

\begin{definition} \label{d-config}
    Let $\link$ be a link diagram, let $T$ be a rooted tree decomposition
    of $\Gamma(\link)$, and let $\beta$ be any bag of $T$.  Then a
    \emph{configuration} at $\beta$ is a sequence of the form
    $(a_1, b_1, a_2, b_2, \ldots, a_u, b_u)$, where:
    \begin{enumerate}
        \item Each $a_i$ is an outgoing arc from some crossing that is
        current at $\beta$, where the destination of this arc is a
        crossing that is either current or forgotten at $\beta$.
        Moreover, every such arc must appear as exactly one of the $a_i$.
        \item Each $b_i$ is an incoming arc to some crossing that is
        current at $\beta$, where the source of this arc is a crossing
        that is either current or forgotten at $\beta$.
        Moreover, every such arc must appear as exactly one of the $b_i$.
        \item If an arc of $\link$ connects two crossings that are
        \emph{both} current at $\beta$, then by conditions~1 and~2,
        such an arc must appear as
        some $a_i$ and also as some $b_j$.  In this case we also
        require that $i=j$.
    \end{enumerate}
    We call each pair $a_i,b_i$ a \emph{matching pair} of arcs in the
    configuration, and if $a_i=b_i$ (as in condition~3 above) 
    then we call this a \emph{trivial pair}.
\end{definition}

Intuitively, each matching pair $a_i,b_i$ describes the start and end
points of a
``partial traversal'' of the link (possibly after some switches and/or splices)
that starts and ends in the bag $\beta$, and that
\emph{only passes through forgotten crossings}.  By placing these
endpoints in a sequence $a_1,b_1,\ldots,a_u,b_u$, we impose an ordering
upon these ``partial traversals'' (which we will eventually use to order the
traversal of arcs in Kauffman's decision tree).

\begin{lemma} \label{l-count}
    Let $\link$ be a link diagram, let $T$ be a rooted tree decomposition
    of $\Gamma(\link)$, and let $\beta$ be any bag of $T$.
    Then configurations at $\beta$ exist (i.e., the definition above can
    be satisfied).  Moreover, then there are at most $(2|\beta|)!^2$
    possible configurations at $\beta$.
\end{lemma}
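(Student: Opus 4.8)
The plan is to prove both claims of Lemma~\ref{l-count}---existence and the counting bound---by examining the three defining conditions of a configuration and reducing the problem to counting orderings of two bounded-size sets of arcs.

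First I would establish existence. The key observation is that conditions~1 and~2 fix \emph{which} arcs must appear as the $a_i$ and $b_j$: the $a_i$ are exactly the outgoing arcs from current crossings whose destination is current or forgotten, and the $b_j$ are exactly the incoming arcs to current crossings whose source is current or forgotten. So the only freedom lies in the pairing and ordering. To produce a valid configuration, I would first handle the trivial pairs forced by condition~3---namely, every arc $e$ that joins two current crossings appears both in the list of $a_i$ and in the list of $b_j$, and must be matched with itself, i.e.\ placed as $a_i = b_i = e$ for a common index $i$. I would argue these forced trivial pairs are consistent (each such $e$ contributes exactly once to each list, so pairing it with itself uses up one slot on each side), and then the remaining non-trivial $a_i$ and remaining non-trivial $b_j$ can be paired off arbitrarily. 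The crucial point here is a \emph{counting balance}: the number of remaining $a$-arcs must equal the number of remaining $b$-arcs, so that a pairing exists. I would verify this by noting that every arc incident to a current crossing is accounted for, and that arcs crossing the ``forgotten'' frontier come in matched source/destination pairs; an arc from a forgotten crossing into a current crossing supplies a $b$ but no $a$, while an arc from a current crossing out to a forgotten crossing supplies an $a$ but no $b$, and Lemma~\ref{l-badarcs} guarantees no arcs reach unvisited crossings, so the frontier is controlled. Making this balance argument airtight is the step I expect to be the main obstacle, since it requires carefully tracking which arcs contribute to which list.

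For the counting bound, I would argue crudely but sufficiently. Let $A$ denote the set of arcs eligible to be some $a_i$ and $B$ the set eligible to be some $b_j$. Since each current crossing has valence~$4$, it is incident to at most $4$ outgoing and $4$ incoming arcs, so $|A| \leq 4|\beta|$ and $|B| \leq 4|\beta|$; a coarser but adequate bound is $|A|, |B| \leq 2|\beta|$ after accounting for the directed structure, which I would state carefully to match the $(2|\beta|)!$ factors. A configuration is determined by an ordering of the $a_i$ together with an ordering of the $b_i$: there are at most $(2|\beta|)!$ ways to arrange the arcs in $A$ into the positions $a_1, \ldots, a_u$, and at most $(2|\beta|)!$ ways to arrange the arcs in $B$ into the positions $b_1, \ldots, b_u$. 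Since conditions~1--3 are then either satisfied or not (condition~3 imposes a constraint but does not create new configurations), every configuration arises from at most one such pair of orderings. Multiplying gives the bound of $(2|\beta|)!^2$.

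I would present the counting as an injection: map each configuration to the pair (ordered sequence of its $a$-arcs, ordered sequence of its $b$-arcs). This map is injective because the full sequence $(a_1, b_1, \ldots, a_u, b_u)$ is recoverable by interleaving the two ordered sequences. Since the number of ordered sequences of a set of size at most $2|\beta|$ is at most $(2|\beta|)!$, the product bound follows immediately. The only subtlety is confirming that the sets $A$ and $B$ each have size at most $2|\beta|$ rather than $4|\beta|$, which I would resolve by the chosen bookkeeping of the $4$-valent structure; if the tighter bound does not hold cleanly, the looser $4|\beta|$ estimate would still suffice up to adjusting the constant, but I would aim to match the stated $(2|\beta|)!^2$ exactly.
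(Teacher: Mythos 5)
Your proposal is correct and follows essentially the same route as the paper: existence via the balance between outgoing and incoming arcs on the current/forgotten frontier (justified by Lemma~\ref{l-badarcs}), and the $(2|\beta|)!^2$ bound by separately counting orderings of the $a_i$ and of the $b_i$. The only tidying needed is your degree bookkeeping: since $\Gamma(\link)$ is the directed 4-valent graph of a link, each current crossing has \emph{exactly} two outgoing and two incoming arcs (not four of each), which directly gives at most $2|\beta|$ arcs of each type and hence the stated bound.
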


\begin{proof}
    To show that the definition can be satisfied, all we need to show is that
    the number of arcs from a current crossing to a current-or-forgotten
    crossing (i.e., the number of arcs $a_i$) equals
    the number of arcs from a current-or-forgotten crossing to a current
    crossing (i.e., the number of arcs $b_i$).
    This follows immediately from the fact that there are
    no arcs joining a forgotten crossing with an \emph{unvisited} crossing
    (Lemma~\ref{l-badarcs}).

    The number of configuration is a simple exercise in counting: there are
    exactly $|\beta|$ crossings current at
    $\beta$, each with exactly two outgoing and two incoming arcs.
    This yields at most $2|\beta|$ arcs $a_i$ and $2|\beta|$ arcs $b_j$,
    giving at most $(2|\beta|)!$ possible orderings of the $a_i$ and
    $(2|\beta|)!$ possible orderings of the $b_i$.
\end{proof}

Our next task is to describe how we order the arcs in Kauffman's
decision tree in order to avoid having to explicitly track the start points of
link traversals in our algorithm.

\begin{definition} \label{d-ordering}
    Let $\link$ be a link diagram, and let $T$ be a nice tree decomposition
    of $\Gamma(\link)$.

    Let $a_1,\ldots,a_{2n}$ be the directed arcs of $\link$.  Let
    $c_i$ denote the crossing at the end of arc $a_i$, and let
    $\beta_i$ be the (unique) forget bag that forgets the crossing $c_i$.

    A \emph{tree-based ordering} of the arcs of $\link$
    is a total order on the arcs $\{a_i\}$ that follows a depth-first
    ordering of the corresponding bags $\{\beta_i\}$.  That is:
    \begin{enumerate}
        \item whenever $\beta_i$ is a descendant bag of $\beta_j$,
        we must have $a_j < a_i$;
        \item for any two disjoint subtrees of $T$,
        \emph{all} of the arcs whose corresponding bags appear in one
        subtree must be ordered before \emph{all} of the arcs whose
        corresponding bags appear in the other subtree.
    \end{enumerate}
\end{definition}

Essentially, this orders the arcs of $\link$ according to how close to
the root of $T$ their ends are forgotten---arcs are
ordered \emph{earlier} when the crossings they point to are forgotten
\emph{closer} to the root.
There are many such possible orderings; for our algorithm, any one will do.%
\footnote{Different tree-based orderings share many common
properties.  For example, given any collection of arcs that are
connected in $\Gamma(\link)$, all tree-based orderings share the same
\emph{minimum} arc in this collection.  This is enough to ensure that
different tree-based orderings will traverse the arcs and
crossings of $\link$ in \emph{exactly} the same order when running
Kauffman's algorithm.}

We now proceed to define an \emph{evaluation}---that is, the aggregation
that we perform for each configuration at each bag.

\begin{definition} \label{d-partialleaf}
    Let $\link$ be a link diagram and let $T$ be a nice tree decomposition
    of $\Gamma(\link)$.  Fix a tree-based ordering $<$ of the arcs of
    $\link$, and let
    $\kappa=(a_1, b_1, a_2, b_2, \ldots, a_u, b_u)$
    be a configuration at some bag $\beta$.

    A \emph{partial leaf} for $\kappa$ assigns one of the three tags
    \texttt{pass}, \texttt{switch} or \texttt{splice}
    to each forgotten crossing at $\beta$, under the following conditions.

    Consider (i)~all the forgotten \emph{crossings} of $\link$, after
    applying any switches and/or splices as described by the chosen tags;
    and (ii)~all the \emph{arcs} of $\link$ whose two endpoints are
    forgotten and/or current.  These join together to form a collection of
    (i)~connected segments of a link that start and end at current crossings
    and whose intermediate crossings are all forgotten;
    and (ii)~closed components of a link that contain only forgotten
    crossings.  We require that:

    \begin{enumerate}
        \item Each segment (as opposed to a closed component) must begin
        at some arc $a_i$ and end at the matching arc $b_i$.

        \item Suppose we traverse the segments and closed components in
        the following order.  First we traverse the segments in the order
        described by $\kappa$ (i.e., the segment from $a_1$ to $b_1$, then
        from $a_2$ to $b_2$, and so on).  Then we traverse the closed
        components according to the ordering $<$: we find the closed component
        with the smallest arc according to $<$ and traverse it starting at
        that arc; then we find the closed component with the smallest
        arc not yet traversed and traverse it from that arc; and so on.

        Then the \texttt{pass}, \texttt{switch} and \texttt{splice} tags
        that we assign to each forgotten crossing must be consistent
        with Kauffman's algorithm under this traversal.  Specifically:
        \begin{itemize}
            \item If we encounter a forgotten crossing for the first
            time on the upper strand, then it must be marked \texttt{pass}.
            \item If we encounter a forgotten crossing for the first
            time on the lower strand, then it must be marked either
            \texttt{switch} or \texttt{splice}.
        \end{itemize}
        %
        %
    \end{enumerate}
\end{definition}

This definition appears complex, but in essence, a partial leaf for $\kappa$ is
simply a choice of operations on each forgotten crossing that
\emph{could} eventually be extended to a leaf of the decision tree in
Kauffman's original algorithm.

Note that the order of traversal in condition~2 
is indeed consistent with Kauffman's algorithm.  The segments must be traversed
before the closed components; this is because the segments will be extended and
eventually closed off as the algorithm moves towards the root of the
tree, and so the segments will eventually contain arcs that are smaller
(according to $<$) than any of the arcs in the closed components
in condition~2 above. 

\begin{definition} \label{d-eval}
    Let $\link$ be a link diagram and let $T$ be a nice tree decomposition
    of $\Gamma(\link)$.  Fix a tree-based ordering $<$ of the arcs of
    $\link$, and let $\kappa$ be a configuration at some bag $\beta$.

    The \emph{evaluation} of $\kappa$ is a Laurent polynomial in the
    variables $\alpha$, $z$ and $\delta$, obtained by summing the terms
    $(-1)^{t_-} z^t \alpha^{w-w_0} \delta^{c-1}$ over all partial leaves
    for $\kappa$, where:
    \begin{itemize}
        \item $t$ is the number of forgotten crossings marked \texttt{splice};
        \item $t_-$ is the number of forgotten negative crossings
        marked \texttt{splice};
        \item $w$ is the number of forgotten positive crossings minus
        the number of forgotten negative crossings, where we ignore any
        crossings marked \texttt{splice} and we reverse the sign of any
        crossings marked \texttt{switch};
        \item $w_0$ is the writhe of the entire original link diagram $\link$
        (including all crossings);
        \item $c$ is the number of closed components that contain only
        forgotten crossings, as described in condition~2 
        of Definition~\ref{d-partialleaf}.
    \end{itemize}
\end{definition}

The structure of the algorithm itself is now simple: we move through the
tree decomposition from the leaf bags to the root bag, and at each bag
$\beta$ we compute the evaluation of all configurations at $\beta$.

This process eventually ends at the root bag, where we can show that
the evaluation of the
(unique) empty configuration encodes the \homfly\ polynomial of
the link $\link$:

\begin{lemma} \label{l-root-homfly}
    Let $\link$ be a link diagram and let $T$ be a nice tree decomposition
    of $\Gamma(\link)$.  Fix a tree-based ordering $<$ of the arcs of $\link$.

    Then there is only one configuration at the root bag of $T$ (which
    is the empty sequence).  Moreover, if the evaluation of this
    configuration is the Laurent polynomial $Q(\alpha, z, \delta)$, then
    the \homfly\ polynomial of $\link$ is obtained by replacing
    $\delta$ with $\deltaexp$.
\end{lemma}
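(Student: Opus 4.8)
The plan is to show two things: first, that the root bag admits exactly one configuration, namely the empty sequence; and second, that the evaluation of this configuration, after substituting $\delta = \deltaexp$, recovers exactly the sum over leaves produced by Kauffman's algorithm. The first claim is immediate from Definition~\ref{d-nice}: the root bag is empty, so no crossing is current at the root. Consequently there are no arcs $a_i$ (which must originate at a current crossing) and no arcs $b_i$ (which must terminate at a current crossing), forcing $u=0$. Thus the only configuration is the empty sequence $()$, and this establishes the first sentence of the lemma.

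\smallskip

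For the main claim, the key observation is that when $\beta$ is the root bag, \emph{every} crossing of $\link$ is forgotten: no crossing is current (the bag is empty) and none is unvisited (every vertex appears in some bag, by condition~(i) of Definition~\ref{d-treewidth}, and hence in the subtree rooted at the root). The plan is therefore to set up a bijection between the partial leaves for the empty configuration and the leaves of Kauffman's decision tree. A partial leaf for $()$ assigns a tag \texttt{pass}, \texttt{switch}, or \texttt{splice} to every crossing of $\link$; since there are no segments (there being no current crossings to serve as endpoints), condition~1 of Definition~\ref{d-partialleaf} is vacuous, and the traversal in condition~2 visits only closed components, covering \emph{all} of $\link$. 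I would argue that this traversal order coincides with Kauffman's: the closed components are visited in order of their smallest arc under $<$, and within each component the walk proceeds exactly as Kauffman prescribes (passing over, branching under). The tagging conditions in condition~2 are precisely Kauffman's branching rule, so the partial leaves for $()$ are in bijection with Kauffman's decision-tree leaves.

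\smallskip

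It then remains to check that the per-leaf polynomial terms agree. Comparing Definition~\ref{d-eval} with the term $(-1)^{t_-} z^t \alpha^{w-w_0} \delta^{c-1}$ from the statement of Kauffman's algorithm, I would verify each exponent in turn. The quantities $t$ (number of splices) and $t_-$ (number of splices on negative crossings) match directly, since all crossings are forgotten. For the writhe $w$ of the modified link, I must confirm that Definition~\ref{d-eval}'s prescription---summing $+1$ for each forgotten positive crossing and $-1$ for each forgotten negative crossing, ignoring spliced crossings and reversing the sign of switched crossings---reproduces the writhe of the modified diagram. A spliced crossing is no longer a crossing (so contributes nothing), a switched crossing reverses its sign, and a passed crossing is unchanged; this is exactly how switching and splicing affect the writhe, so the two notions of $w$ coincide. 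The constant $w_0$ is identical in both places, and $c$, the number of closed components, equals the number of components of the modified link precisely because every component is closed (no segments survive at the root). Hence the summands agree leaf by leaf, and summing over the bijection gives $Q(\alpha,z,\delta)$ equal to the output of Kauffman's algorithm once $\delta$ is expanded to $\deltaexp$.

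\smallskip

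I expect the main obstacle to be justifying that the traversal order defined in condition~2 of Definition~\ref{d-partialleaf}, which is phrased intrinsically in terms of the tree-based ordering $<$, genuinely reproduces the arc-traversal order of Kauffman's algorithm from the statement of the algorithm---in particular that ``lowest-numbered untraversed arc'' in Kauffman's walk corresponds to ``smallest arc under $<$'' among untraversed closed components. The footnote following Definition~\ref{d-ordering} asserts that any tree-based ordering induces the same traversal behaviour in Kauffman's algorithm, so the cleanest route is to invoke that property and treat $<$ itself as Kauffman's arc labelling. The remaining verifications are then the routine exponent-matching described above.
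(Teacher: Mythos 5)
Your proof is correct and follows essentially the same route as the paper's: identify the empty sequence as the unique root configuration, observe that at the root every crossing is forgotten so partial leaves coincide with the leaves of Kauffman's decision tree (taking $<$ as Kauffman's arc labelling), and check that the per-leaf terms agree. You simply spell out in more detail the steps the paper treats as immediate (the exponent-by-exponent matching of $t$, $t_-$, $w$, $w_0$, $c$, and the agreement of traversal orders).
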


\begin{proof}
    At the root bag, \emph{every} crossing is forgotten; therefore no
    crossings are current and so the only configuration is the empty sequence.
    Call this $\kappa_0$.

    Following Definition~\ref{d-partialleaf}, we then see that the
    partial leaves for $\kappa_0$ are precisely the leaves
    of the decision tree in Kauffman's skein-template algorithm,
    assuming that we order the arcs in Kauffman's algorithm using
    our tree-based ordering $<$.

    Moreover, when evaluating $\kappa_0$, the terms
    $(-1)^{t_-} z^t \alpha^{w-w_0} \delta^{c-1}$ that we sum are
    precisely the polynomials that we sum in Kauffman's algorithm, with
    the exception that we keep $\delta$ as a separate variable instead of
    expanding it to $\deltaexp$.

    It follows that, if we take this evaluation and expand
    $\delta$ to $\deltaexp$, then we obtain the same result as Kauffman's
    algorithm, which is the \homfly\ polynomial of $\link$.
\end{proof}

\subsection{Running the algorithm}

Having defined the problems to solve at each bag, we
can now describe the algorithm in full.

\begin{algorithm} \label{a-fpt}
    Suppose we are given a link diagram $\link$
    and a nice tree decomposition $T$
    of $\Gamma(\link)$.  Then the following algorithm computes the
    \homfly\ polynomial of $\link$.

    If $\link$ contains any trivial twists---that is, arcs that run from
    a crossing back to itself---then we untwist them now.
    This preserves the topology of the link, and so does not change the
    \homfly\ polynomial.  If this produces any zero-crossing
    components then we also remove them---this \emph{does} change the
    \homfly\ polynomial (as explained in the introduction, we lose a
    factor of $\deltaexp$), but we can simply adjust the result once the
    algorithm has finished by multiplying through by $\deltaexp$ again.

    Next, fix a tree-based ordering $<$ of the arcs of $\link$.

    Now, as described at the end of Section~\ref{ss-framework},
    we work through the bags of $T$ in order from leaves
    to root.  At each bag $\beta$ we compute and store the evaluation of
    all configurations at $\beta$.  How we do this depends upon the type
    of the bag $\beta$.

        \bigskip

        \noindent \emph{If $\beta$ is a leaf bag:}

        In this case we have exactly one current crossing $c$, and every
        incoming and outgoing arc from $c$ connects it to an unvisited
        crossing.  Therefore there is only one configuration (the empty
        sequence).  Moreover, since there are no forgotten crossings at
        all, this configuration has an evaluation of
        $\alpha^{-w_0}\delta^{-1}$, where $w_0$ is the writhe of the
        entire input diagram $\link$.

        \bigskip

        \noindent \emph{If $\beta$ is an introduce bag:}

        Let $c$ be the new crossing that is introduced in $\beta$, and
        let $\beta'$ be the child bag of $\beta$.  Note that, by
        applying Lemma~\ref{l-badarcs} to the bag $\beta'$, we see that each
        of the four arcs that meets $c$ must connect $c$ to either a
        current or unvisited crossing at $\beta$.

        If all four of these arcs connect $c$ to an unvisited crossing
        at $\beta$, then the configurations at $\beta$ are precisely
        the configurations at $\beta'$.  Moreover, since the forgotten
        crossings at $\beta'$ and $\beta$ are the same, it follows that
        the partial leaves and evaluation of each configuration will be
        identical at bags $\beta'$ and $\beta$, and so we can copy all
        of our computations
        from the child bag $\beta'$ directly to $\beta$ with no changes.

        If one or more arcs connects $c$ to a current crossing at $\beta$,
        then each configuration $\kappa'$ at $\beta'$ gives rise to many
        configurations at $\beta$.  Specifically, each such arc $a$ will
        appear as a new trivial pair $a_i = b_i = a$ in the sequence
        (see condition~3 of Definition~\ref{d-config}). 
        This pair may be inserted anywhere amongst the matching pairs
        from $\kappa'$; that is, we can extend the sequence
        $(a_1, b_1, \ldots, a_u, b_u)$ to
        $(a_1, b_1, \ldots, a_j, b_j, a,a, a_{j+1}, b_{j+1}, \ldots, a_u, b_u)$
        for any insertion point $j$.
        As before, the partial leaves after this insertion are identical to the
        partial leaves for $\kappa'$, and so the evaluation of
        each such new configuration is identical to the evaluation of
        $\kappa'$.

        \bigskip
        
        \noindent \emph{If $\beta$ is a join bag:}

        Let $\beta_1$ and $\beta_2$ be the two child bags of $\beta$.
        We iterate through all pairs $(\kappa_1,\kappa_2)$ where
        each $\kappa_i$ is a configuration at $\beta_i$, and attempt
        to find ``compatible'' pairs that can be merged to form a
        configuration at $\beta$.
        Note that all forgotten crossings
        at $\beta_1$ will be unvisited at $\beta_2$, and
        all forgotten crossings at $\beta_2$ will be
        unvisited at $\beta_1$.

        The only arcs that appear in the sequences for \emph{both}
        $\kappa_1$ and $\kappa_2$ are those arcs whose endpoints are both
        current at $\beta$.  By Definition~\ref{d-config}, such arcs
        must appear as trivial pairs in both $\kappa_1$ and $\kappa_2$.
        Therefore, if these trivial pairs all appear in the same relative order
        in both $\kappa_1$ and $\kappa_2$, we can merge $\kappa_1$ and
        $\kappa_2$ to form a configuration at $\beta$---in fact there
        are many ways to do this, since we can interleave the two
        sequences for $\kappa_1$ and $\kappa_2$ however we like as long as
        the matching pairs from each individual $\kappa_i$ are all kept in the
        same relative order.
        
        Since the forgotten
        crossings for $\beta_1$ and $\beta_2$ are disjoint, we can combine any
        partial leaf for $\kappa_1$ with any partial leaf
        for $\kappa_2$ to form a partial leaf for the new configuration
        $\kappa$ at $\beta$. Therefore the
        evaluation of $\kappa$ is $e_1 \cdot e_2 \cdot \alpha^{w_0}\delta$,
        where each $e_i$ is the evaluation of $\kappa_i$.
        Here the extra factor of $\alpha^{w_0}\delta$ compensates for the
        fact that each polynomial term from Definition~\ref{d-eval} includes a
        ``constant factor'' of $\alpha^{-w_0}\delta^{-1}$ which we
        inherit twice from $e_1$ and $e_2$.

        If the trivial pairs for $\kappa_1$ and $\kappa_2$ do \emph{not}
        appear in the same relative order in both sequences,
        then we cannot merge the
        two configurations to form a new configuration at $\beta$, and
        so we ignore this pair of configurations $(\kappa_1,\kappa_2)$
        and move on to the next pair.

        \bigskip
        
        \noindent \emph{If $\beta$ is a forget bag:}

        Let $c$ be the crossing that is forgotten in $\beta$, and
        let $\beta'$ be the child bag of $\beta$.
        Once more we iterate through all configurations at
        $\beta'$; let $\kappa'$ be such a configuration.

        We consider applying each of the tags \texttt{pass},
        \texttt{switch} and \texttt{splice} to the forgotten crossing $c$.
        For consistency with Kauffman's decision tree,
        we only allow the \texttt{pass} tag if
        the upper incoming arc into $c$ appears
        earlier in $\kappa'$ than the lower incoming arc into $c$ (which means
        we first encounter $c$ on the upper strand); likewise, we
        only allow the \texttt{switch} and \texttt{splice} tags if the
        lower incoming arc into $c$ appears earlier in $\kappa'$ than the
        upper incoming arc into $c$.

        Having chosen a tag, we then attempt to
        convert $\kappa'$ into a new configuration $\kappa$ at $\beta$.
        This involves combining
        matching pairs of $\kappa'$ that connect with $c$
        to reflect how the link traversal passes through
        the now-forgotten crossing $c$.
        There are two ways that this can be done:
        \begin{itemize}
            \item Matching pairs on either side of $c$ could be adjacent in
            $\kappa'$.  For instance, suppose we apply the \texttt{switch} tag.
            Then $\kappa'$ could be of the form
            $\ldots, a_i, b_i, \allowbreak a_{i+1}, b_{i+1}, \ldots$, where
            $b_i$ is an incoming arc for $c$ and $a_{i+1}$ is the
            opposite outgoing arc for $c$ (in the case of \texttt{splice},
            $a_{i+1}$ would need to be the \emph{adjacent} outgoing arc
            instead).
            The new configuration $\kappa$ would then be
            $\ldots, a_i, b_{i+1}, \ldots$; here
            $(a_i,b_{i+1})$ becomes a new matching pair.

            \item There could be a single matching pair in $\kappa'$ that runs
            from $c$ back around to itself.  For instance, if we apply
            the \texttt{switch} tag then $\kappa'$ could be of the form
            $\ldots, a_i, b_i, \ldots$, where
            $a_i$ is an outgoing arc for $c$ and $b_i$ is the
            opposite incoming arc (again, for \texttt{splice}
            we would need $b_i$ to be the \emph{adjacent} incoming arc
            instead).
            In this case, forgetting $c$ will connect the two ends of the
            traversal segment from $a_i$ to $b_i$ to form
            a new closed link component, and the new configuration $\kappa$
            is obtained by deleting the pair $(a_i,b_i)$ from $\kappa'$.
        \end{itemize}
        Note that we must combine \emph{two} pairs of matching pairs---one
        for each strand that passes through $c$.  If this cannot
        be done as described above (i.e., the relevant matching pairs
        are neither adjacent in $\kappa'$ nor do they run from $c$ back
        to itself), then we cannot apply our chosen tag to
        $\kappa'$.  In this case we just move to the next choice of tag
        for $c$ and/or the next available configuration at $\beta'$.

        If we \emph{are} able to use our chosen tag with $\kappa'$, then
        we can use the evaluation of $\kappa'$ to compute the evaluation of
        the new configuration $\kappa$.  We must, however, multiply by
        an appropriate factor to reflect how the partial leaves have
        changed, following Definition~\ref{d-eval}:
        \begin{itemize}
            \item if we chose \texttt{splice} then we must multiply
            by $z$, and also by $-1$ if $c$ is a negative crossing;
            \item if we \texttt{pass} a positive crossing or
            \texttt{switch} a negative crossing, we must multiply by
            $\alpha$;
            \item if we \texttt{pass} a negative crossing or
            \texttt{switch} a positive crossing, we must multiply by
            $\alpha^{-1}$;
            \item if we formed a new closed link component then we must
            multiply by $\delta$.
        \end{itemize}

        Since $\kappa$ is obtained by deleting and/or merging matching pairs
        from $\kappa'$, it is possible that several different child
        configurations $\kappa'$ could yield the \emph{same} new
        configuration $\kappa$.  If this happens, we simply
        sum all of the resulting evaluations at $\kappa$.

    \bigskip

    Once we have finished working through all the bags, we take the evaluation
    of the unique configuration at the root bag and expand
    $\delta$ to $\deltaexp$ as described in Lemma~\ref{l-root-homfly}.
    This yields the final \homfly\ polynomial of $\link$.
\end{algorithm}

\begin{theorem} \label{t-maintime}
    If the nice tree decomposition in Algorithm~\ref{a-fpt} has
    $O(n)$ bags and width $k$, then the algorithm has running time
    $O\left((2k)!^4 \cdot \poly(n)\right)$,
    where $n$ is the number of crossings in the link diagram $\link$.
\end{theorem}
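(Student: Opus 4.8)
The plan is to bound the work done at each of the $O(n)$ bags separately and to show that the join bags dominate, each costing $(2(k+1))!^4 \cdot \poly(n)$. Since the width is $k$, every bag $\beta$ satisfies $|\beta| \leq k+1$, so Lemma~\ref{l-count} gives at most $(2(k+1))!^2$ configurations at any bag. I would first observe that every evaluation is a manageable object: following Definition~\ref{d-eval}, the exponents of $\alpha$, $z$ and $\delta$ each lie in a range of size $O(n)$, and every coefficient is a signed count of partial leaves and hence has $O(n)$ bits. Thus each evaluation can be stored in $\poly(n)$ space, and all the polynomial arithmetic used below---adding evaluations and multiplying by the monomial factors prescribed in Algorithm~\ref{a-fpt}---costs $\poly(n)$.

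Next I would treat the four bag types in turn. A leaf bag has a single configuration and costs $O(1)$. At an introduce bag, each child configuration spawns at most $\poly(n)$ new configurations (a constant number of new trivial pairs, each inserted at one of $O(k)$ positions), and each new configuration is produced merely by copying an evaluation; since there are at most $(2(k+1))!^2$ resulting configurations, the total is $(2(k+1))!^2 \cdot \poly(n)$. At a forget bag we iterate over the $\leq (2(k+1))!^2$ child configurations, try each of the three tags, perform the constant number of pair-merges described in Algorithm~\ref{a-fpt}, and add the rescaled evaluation into the appropriate target; this again costs $(2(k+1))!^2 \cdot \poly(n)$.

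The join bag is the dominant case and the main obstacle. Here the algorithm ranges over all pairs $(\kappa_1, \kappa_2)$ of child configurations, of which there are at most $\bigl((2(k+1))!^2\bigr)^2 = (2(k+1))!^4$; testing compatibility of a pair and doing the bookkeeping costs $\poly(n)$, giving $(2(k+1))!^4 \cdot \poly(n)$ for the iteration itself. The subtlety is that a single compatible pair can be merged in many ways by interleaving the two sequences, and enumerating these interleavings naively could cost as much as $2^{\Theta(k)}$ per pair, which is \emph{not} $\poly(n)$. I would resolve this with the observation that restricting a merged configuration $\kappa$ at $\beta$ to the arcs relevant to each child recovers $\kappa_1$ and $\kappa_2$ uniquely, so each valid $\kappa$ arises from exactly one pair via exactly one interleaving. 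Hence the valid interleavings enumerated across all pairs inject into the configurations at $\beta$, so their total number is at most $(2(k+1))!^2$; the entire interleaving enumeration is therefore dominated by the $(2(k+1))!^4 \cdot \poly(n)$ cost of iterating over pairs.

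Finally I would assemble the pieces. Summing over the $O(n)$ bags gives $O\bigl(n \cdot (2(k+1))!^4 \cdot \poly(n)\bigr)$, and absorbing the $+1$ via $(2(k+1))! = (2k+2)(2k+1)(2k)! = \poly(k)\cdot(2k)!$ together with $k < n$ (the treewidth of an $n$-vertex graph is at most $n-1$) collapses this to $O\bigl((2k)!^4 \cdot \poly(n)\bigr)$, as claimed.
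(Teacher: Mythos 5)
Your proof is correct and follows essentially the same route as the paper: a bag-by-bag accounting in which the join bags dominate at $O\left((2k)!^4 \cdot \poly(n)\right)$, with the key point---that the many interleavings a single compatible pair can produce are collectively bounded by the total configuration count of Lemma~\ref{l-count}---being exactly the paper's argument. Your injectivity observation (each merged configuration determines its pair and its interleaving uniquely) and your explicit absorption of the $(2(k+1))!$ versus $(2k)!$ discrepancy are details the paper glosses over, but they sharpen the same argument rather than constituting a different approach.
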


\begin{proof}
    Most of the operations in the algorithm are clearly polynomial time,
    and we do not discuss their precise complexities here.
    The only source of super-polynomial running time comes from the
    large number of
    configurations to process at each bag.

    When processing a forget or introduce bag, Lemma~\ref{l-count} shows
    that there are $\leq (2k)!^2$ child configurations to process,
    requiring $O\left((2k)!^2 \cdot \poly(n)\right)$ time in total.
    When processing a join bag, we iterate through \emph{pairs} of
    configurations $(\kappa_1,\kappa_2)$, and so
    the total processing time becomes
    $O\left((2k)!^4 \cdot \poly(n)\right)$.
    Note that, although any individual pair $(\kappa_1,\kappa_2)$ could yield a
    super-polynomial number of new configurations $\kappa$ (due to
    the many possible ways to merge configurations), these 
    nevertheless contribute
    to a \emph{total} number of configurations at the join bag which
    is still bounded by Lemma~\ref{l-count}, and so the total processing
    time at a join bag remains no worse than
    $O\left((2k)!^4 \cdot \poly(n)\right)$.
\end{proof}

Algorithm~\ref{a-fpt} assumes that you already have a tree decomposition;
however, finding one with the smallest possible width is an NP-hard
problem \cite{cygan15-fpt}.  We therefore extend our running time
analysis to the more common case where only the link is given, and a
tree decomposition is \emph{not} known in advance.

\begin{corollary} \label{c-time}
    Given a link diagram $\link$ with $n$ crossings whose graph
    $\Gamma(\link)$ has treewidth $k$, it is possible to compute the
    \homfly\ polynomial of $\link$ in
    time $O\left((8k)!^4 \cdot \poly(n)\right)$.
\end{corollary}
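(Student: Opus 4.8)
The plan is to close the small gap between Theorem~\ref{t-maintime} and this corollary: the theorem assumes that a nice tree decomposition of width $k$ is handed to us, whereas here we are given only the link diagram together with a bound on its treewidth. The task therefore reduces to \emph{constructing} a suitable nice tree decomposition efficiently and then invoking Theorem~\ref{t-maintime}. The one genuine obstacle is that finding a tree decomposition of \emph{optimal} width is NP-hard (as noted via \cite{cygan15-fpt}); we must settle for an approximately optimal decomposition, and it is precisely this approximation that converts the factor $(2k)!^4$ of Theorem~\ref{t-maintime} into the factor $(8k)!^4$ claimed here.

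First I would invoke a fixed-parameter tractable approximation algorithm for treewidth, such as the one in \cite{cygan15-fpt}, which given a graph and an integer $p$ runs in time $2^{O(p)} \cdot \poly(n)$ and either returns a tree decomposition of width at most $4p + O(1)$ or correctly reports that the treewidth exceeds $p$. Since parallel edges and loops do not affect treewidth, I would run this on the underlying simple graph of $\Gamma(\link)$ and reinterpret the output directly as a tree decomposition of the multigraph $\Gamma(\link)$. Because the true treewidth $k$ is not known in advance, I would run the approximation for $p = 1, 2, 3, \ldots$ and stop at the first value $p^\ast$ that succeeds. Since the algorithm is guaranteed to succeed once $p \geq k$, this halts with $p^\ast \leq k$; and since it failed at $p^\ast - 1$ the returned decomposition has width at most $4p^\ast + O(1) \leq 4k + O(1)$. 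The cumulative cost of these runs is geometric in $p$ and hence $2^{O(k)} \cdot \poly(n)$.

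Next I would convert this decomposition into a \emph{nice} one of the same width, using the standard polynomial-time transformation (see \cite{cygan15-fpt}), which produces $O(k \cdot n)$ bags; this is still $\poly(n)$. Applying Theorem~\ref{t-maintime} to a nice tree decomposition of width $4k + O(1)$ then computes the \homfly\ polynomial. Strictly, Theorem~\ref{t-maintime} is phrased for $O(n)$ bags, but its analysis applied to $O(k \cdot n)$ bags merely multiplies the running time by an additional factor of $O(k) \leq O(n)$, which is absorbed into the $\poly(n)$ term; so the resulting running time is $(2(4k + O(1)))!^4 \cdot \poly(n)$.

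Finally I would tidy up the bound. Writing the width as $4k + c$ for a constant $c$, Theorem~\ref{t-maintime} yields $(8k + 2c)!^4 \cdot \poly(n)$, and since $(8k + 2c)! \leq (8k)! \cdot (8k + 2c)^{2c}$ while $k \leq n$, the prefactor $(8k + 2c)^{8c}$ is polynomial in $n$ and the bound collapses to $(8k)!^4 \cdot \poly(n)$. The separate $2^{O(k)} \cdot \poly(n)$ cost of building the decomposition is negligible, since $2^{O(k)}$ grows far more slowly than $(8k)!^4$. The hard part of the argument is not any new idea but the careful accounting: one must check that the factor-$4$ width blow-up inherent in any efficient treewidth approximation is exactly what produces the exponent $(8k)!^4$, and confirm that both the unknown value of $k$ and the polynomially-many bags contribute only to the $\poly(n)$ factor and never to the factorial.
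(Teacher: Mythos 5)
Your proposal is correct and follows essentially the same route as the paper: invoke the Cygan et~al.\ fixed-parameter $4$-approximation for treewidth, convert the output to a nice tree decomposition, and apply Theorem~\ref{t-maintime} with width $4k+O(1)$, noting that the decomposition-building time is dominated by the main algorithm. Your additional bookkeeping---iterating over the unknown value of $k$, accounting for the $O(k \cdot n)$ bag count, and collapsing $\left(8k+O(1)\right)!^4$ into $(8k)!^4 \cdot \poly(n)$---merely makes explicit some details that the paper's proof leaves implicit.
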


\begin{proof}
    Cygan et~al.\ \cite{cygan15-fpt} give an algorithm that can construct
    a tree decomposition with width $\leq 4k+4$ and $n$ bags in time
    $O(8^k k^2 \cdot n^2)$.  Kloks \cite{kloks94-treewidth} then shows how
    to convert this into a \emph{nice} tree decomposition in $O(n)$ time
    with the same width, and with $O(n)$ bags.
    Our corollary now follows by applying Theorem~\ref{t-maintime} with
    width $4k+4$.
    Note that the running time from
    Theorem~\ref{t-maintime} dominates the preprocessing time required to
    build the tree decompositions.
\end{proof}

Corollary~\ref{c-time} shows that computing the \homfly\ polynomial is
fixed-para\-meter tractable, thereby finally
concluding the proof of Theorem~\ref{t-fpt}, our first main result.

However, unlike Kauffman's algorithm, our algorithm is \emph{not} polynomial
space, since it must store up to $(2k)!^2$ configurations and their
evaluations at each bag.

We can now finish this paper with our second main result.
Since the treewidth of a planar
graph is $O(\sqrt n)$, we can substitute $k = O(\sqrt n)$ into
Corollary~\ref{c-time} to yield the following:

\begin{corollary} \label{c-subexp}
    Given a link diagram $\link$ with $n$ crossings,
    it is possible to compute the \homfly\ polynomial of $\link$ in
    time $e^{O(\sqrt n \cdot \log n)}$.

    That is, it is possible to compute the \homfly\ polynomial in
    sub-exponential time.
\end{corollary}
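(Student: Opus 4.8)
The plan is to derive this directly from Corollary~\ref{c-time} together with the planar treewidth bound. The graph $\Gamma(\link)$ is a planar multigraph with exactly $n$ vertices---one for each crossing of $\link$---so by the consequence of the Lipton--Tarjan separator theorem recorded after Definition~\ref{d-treewidth}, its treewidth is $k = O(\sqrt n)$. Substituting this value of $k$ into the running time $O\!\left((8k)!^4 \cdot \poly(n)\right)$ from Corollary~\ref{c-time} reduces everything to estimating the factorial term when $k = O(\sqrt n)$.

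To carry out that estimate I would use Stirling's approximation in the form $m! = e^{O(m \log m)}$. Taking $m = 8k = O(\sqrt n)$ gives $(8k)! = e^{O(\sqrt n \cdot \log \sqrt n)} = e^{O(\sqrt n \cdot \log n)}$, using $\log \sqrt n = \tfrac12 \log n$. Raising to the fourth power changes only the implicit constant in the exponent, so $(8k)!^4 = e^{O(\sqrt n \cdot \log n)}$ as well; and since $\poly(n) = e^{O(\log n)}$ is dominated by this term, the total running time collapses to $e^{O(\sqrt n \cdot \log n)} = e^{o(n)}$, which is the desired sub-exponential bound.

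I do not anticipate any genuine obstacle here: the corollary is a purely asymptotic consequence of Corollary~\ref{c-time}, and the real content of the result lives in the fixed-parameter tractable algorithm that underlies it. The only points that warrant a moment's care are applying Stirling's estimate correctly and confirming that neither the fourth power nor the trailing polynomial factor disturbs the order of the exponent.
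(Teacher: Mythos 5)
Your proposal is correct and follows exactly the paper's own route: the paper likewise derives this corollary by substituting the planar treewidth bound $k = O(\sqrt{n})$ into Corollary~\ref{c-time}, with the factorial estimate $(8k)!^4 = e^{O(\sqrt{n}\cdot\log n)}$ left implicit. Your explicit Stirling calculation and the observation that the $\poly(n)$ factor is absorbed simply spell out details the paper takes for granted.
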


%
%

\bibliographystyle{amsplain}
\bibliography{pure}

\end{document}